\documentclass{article}
\usepackage{amsmath,amsthm,amssymb,amsfonts,bm,mathrsfs}
\usepackage{fancyhdr}
\newtheorem{theorem}{Theorem}[section]
\newtheorem{definition}{Definition}[section]

\newtheorem{lemma}{Lemma}[section]
\newtheorem{question}[theorem]{Question}

\begin{document}
\title{\normalsize\bf Li-Yorke chaos for invertible mappings on  non-compact spaces}

\author{\small Hou Bingzhe\thanks{E-mail:houbz@jlu.edu.cn and Adrress:Department of Mathematics,Jilin University, ChangChun,130012,P. R. China.}\quad and\quad\small Luo Lvlin\thanks{E-mail:luoll12@mails.jlu.edu.cn and Adrress:Department of Mathematics,Jilin University, ChangChun,130012,P. R. China.}}
\date{}
\maketitle
\begin{abstract}
\noindent In this paper, we give two examples to show that an invertible mapping is Li-Yorke chaotic does not imply its inverse being Li-Yorke chaotic, in which
one is an invertible bounded linear operator on an infinite dimensional Hilbert space and the other is a homeomorphism on the unit open disk. Moreover, we use the last
example to prove that Li-Yorke chaos is not preserved under topological conjugacy.

{\noindent\small{\bf Keywords:}}
Invertible dynamical systems, Li-Yorke chaos, non-compact spaces, topological conjugacy.

{\noindent\small{\bf MSC 2010:}}
Primary 37B99, 54H20; Secondary 37C15

{\noindent\small{\bf Supported by:}} the National Nature Science Foundation of China
(Grant No. 11001099).
\end{abstract}

\section{\normalsize Introduction}
In this paper, we are interested in invertible dynamical system $(X,f)$, where $X$ is a metric space and $f:
X\rightarrow X$ is a homeomorphism. There is a natural problem in invertible dynamical systems as follows.
\begin{question}
Let $(X,f)$ be an invertible dynamical system. If $f$ has a dynamical property $\mathfrak{P}$, does its inverse $f^{-1}$ also have property $\mathfrak{P}$?
\end{question}
It is not difficult to see that the answer is positive for many properties such as
transitivity, mixing and Devaney chaos. However, the conclusion for Li-Yorke chaos, which is defined by Li and Yorke in \cite{LY} in 1975, has not been known.

\begin{definition}
Let $(X,f)$ be a dynamical system. $\{x,y\}\subseteq X$ is said to be a Li-Yorke chaotic pair, if
\begin{equation*}
\limsup\limits_{n\rightarrow+\infty}d(f^{n}(x),f^{n}(y))>0 \ \ and \ \
\liminf\limits_{n\rightarrow+\infty}d(f^{n}(x),f^{n}(y))=0.
\end{equation*}
Furthermore, $f$ is called Li-Yorke chaotic, if there exists an
uncountable subset $\Gamma\subseteq X$ such that each pair of two
distinct points in $\Gamma$ is a Li-Yorke chaotic pair.
\end{definition}

In the present article, we focus on invertible dynamical systems on non-compact metric spaces. Then we study Li-Yorke chaos for invertible mappings on non-compact spaces and
give a negative answer to above question for Li-Yorke chaos. In fact, we give two counterexamples on infinite dimensional space and finite dimensional space respectively.

In section 2, we will give an example to show that an invertible bounded linear operator on infinite dimensional Hilbert spaces is Li-Yorke chaotic does not imply its inverse being Li-Yorke chaotic. In fact,
the example is distributional chaotic and has been introduced in \cite{Hou}.
Distributional chaos that is defined in Schweizer and Sm\'{\i}tal's paper \cite{S-S},
is a kind of chaos stronger than Li-Yorke chaos. Notice that there exists an invertible bilateral forward weighted shift operator $T$ introduced in \cite{LH}, such that $T$ is distributional chaotic but
$T^{-1}$ is not. However, the inverse of the operator $T$ in \cite{LH} is Li-Yorke chaotic.

In section 3, we will give a Li-Yorke chaotic homeomorphism $f$ on the unit open disk such that its inverse $f^{-1}$ is not Li-Yorke chaotic. Moreover, we obtain a homeomorphism $g$ on two dimensional Euclidean space being not Li-Yorke chaotic but topologically conjugate to above $f$, which means that Li-Yorke chaos is not preserved under
topological conjugacy.
\begin{definition}
Let $f:
X\rightarrow X$ and $g: Y\rightarrow Y$ be two continuous mappings.
$f$ is said to be topologically conjugate to $g$, if there exists a
homeomorphism $h: X\rightarrow Y$ such that $h\circ f=g\circ
h$. We also say that $h$ is a topological conjugacy from $f$ to $g$.
\end{definition}
It is
easy to see that many properties are preserved under topological
conjugacy, such as density of periodic points,
transitivity, mixing and Devaney chaos. In \cite{LTX}, Lu et. al. gave a homeomorphism on a discrete topology space such that it is Li-Yorke chaotic
but its inverse is not.

\section{\normalsize Invertible bounded linear operators on infinite dimensional Hilbert spaces}

\begin{theorem}\label{ifnchaos}
There exists an invertible bounded linear operator $T$ on an infinite dimensional Hilbert spaces $\mathcal{H}$,
such that $T$ is Li-Yorke chaotic but $T^{-1}$ is not.
\end{theorem}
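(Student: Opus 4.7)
The plan is to take $T$ to be the specific invertible bilateral weighted shift on $\mathcal{H}=\ell^{2}(\mathbb{Z})$ constructed in \cite{Hou}, where $T$ has already been shown to be distributionally chaotic. Writing $\{e_n\}_{n\in\mathbb{Z}}$ for the standard orthonormal basis, $Te_n=w_ne_{n+1}$ for a bounded, bounded-below weight sequence $\{w_n\}$, so that by induction $T^{-n}e_k=\bigl(\prod_{j=1}^{n}w_{k-j}\bigr)^{-1}e_{k-n}$.

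Li-Yorke chaos of $T$ will be deduced directly from the distributional chaos established in \cite{Hou}: by definition any distributionally scrambled pair is in particular a Li-Yorke pair, since the density conditions in the distributional chaos definition refine $\liminf_n d(T^{n}x,T^{n}y)=0$ and $\limsup_n d(T^{n}x,T^{n}y)>0$. The uncountable distributionally scrambled set from \cite{Hou} therefore serves directly as an uncountable Li-Yorke scrambled set for $T$.

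The substantive task is to prove that $T^{-1}$ is not Li-Yorke chaotic. I would show the stronger statement that every forward orbit of $T^{-1}$ converges to the origin, i.e.\ $\|T^{-n}x\|\to 0$ for every $x\in\mathcal{H}$. The key feature of the weight sequence in \cite{Hou} is its asymmetry about $n=0$: while the positive-index weights oscillate enough to force distributional chaos for $T$, the negative-index weights are arranged so that for each fixed $k$ the product $\prod_{j=1}^{n}w_{k-j}$ tends to $\infty$ as $n\to\infty$. Parseval then gives
\[
\|T^{-n}x\|^{2}=\sum_{k\in\mathbb{Z}}|a_k|^{2}\Bigl(\prod_{j=1}^{n}w_{k-j}\Bigr)^{-2}
\]
for $x=\sum_k a_k e_k$, and one checks via a dominated-convergence argument that each summand tends to $0$ pointwise while being majorised by a summable dominant, so $\|T^{-n}x\|\to 0$. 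Consequently, for any distinct $x,y\in\mathcal{H}$,
\[
\|T^{-n}x-T^{-n}y\|=\|T^{-n}(x-y)\|\longrightarrow 0,
\]
forcing $\limsup_n d(T^{-n}x,T^{-n}y)=0$; no pair $\{x,y\}\subseteq\mathcal{H}$ satisfies the Li-Yorke condition, let alone an uncountable scrambled set.

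The main obstacle is the dominated-convergence step for $T^{-1}$, which rests on the precise form of the weight sequence from \cite{Hou}: one must confirm that the weights $\{w_n\}_{n\le 0}$ produce backward products growing quickly enough that the maps $n\mapsto\|T^{-n}e_k\|^{2}$ are bounded in $n$ by a sequence $C_k$ with $\sum|a_k|^{2}C_k<\infty$, while simultaneously the weights $\{w_n\}_{n>0}$ retain the oscillatory character required for the forward distributional chaos. Once this weight asymmetry supplied by \cite{Hou} is in hand, Li-Yorke chaos of $T$ is immediate from distributional chaos and the absence of Li-Yorke pairs for $T^{-1}$ follows from the short norm estimate above.
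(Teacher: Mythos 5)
Your reduction of Li--Yorke chaos of $T$ to the distributional chaos established in \cite{Hou} is fine, but the core of your argument for $T^{-1}$ contains a fatal flaw, independent of the choice of operator. You propose to show that $\|T^{-n}x\|\to 0$ for \emph{every} $x\in\mathcal{H}$. For an invertible bounded linear operator this is incompatible with $T$ being Li--Yorke chaotic: pointwise convergence $T^{-n}x\to 0$ implies $\sup_n\|T^{-n}\|=M<\infty$ by the uniform boundedness principle, whence $\|z\|=\|T^{-n}(T^{n}z)\|\le M\|T^{n}z\|$, i.e.\ $\|T^{n}z\|\ge\|z\|/M$ for all $n$ and all $z$. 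Then for any distinct $x,y$ one gets $\liminf_n\|T^{n}(x-y)\|\ge\|x-y\|/M>0$, so $T$ has no Li--Yorke pairs at all, contradicting the first half of your own argument. The paper avoids this by proving the opposite asymptotic for the inverse: $\|T^{-n}(x)\|\to+\infty$ for every $x\ne 0$, which kills the $\liminf=0$ condition for $T^{-1}$ while remaining perfectly consistent with $\liminf_n\|T^{n}(x-y)\|=0$ for suitable pairs.

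There is also a misidentification of the construction. The operator from \cite{Hou} that the paper uses is not a bilateral weighted shift on $\ell^{2}(\mathbb{Z})$; it is a direct sum $T=\bigoplus_{i}T_i$ of finite-dimensional upper-bidiagonal blocks $T_i=(1-\epsilon_i)I+2\epsilon_i N$ acting on an orthogonal decomposition $\mathcal{H}=\bigoplus_i H_i$, and the non-chaoticity of $T^{-1}$ comes from each $T_i^{-1}$ being upper triangular with all eigenvalues $\tfrac{1}{1-\epsilon_i}>1$. The invertible bilateral weighted shift you describe is the example from \cite{LH}, and the paper explicitly points out that \emph{that} operator's inverse \emph{is} Li--Yorke chaotic (it only fails to be distributionally chaotic), so the weighted-shift route cannot deliver the theorem as you have set it up. You would need to replace your target property for $T^{-1}$ by uniform norm blow-up (or some other obstruction to $\liminf=0$) and work with an operator for which that can coexist with forward Li--Yorke behaviour.
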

\begin{proof}
First of all, let us review the construction of a distributional chaotic operator defined in \cite{Hou}.
Given any $\epsilon>0$. Let $C_i$ be a sequence of positive
numbers increasing to $+\infty$. For each $i\in \mathbb{N}$, set
$\epsilon_i=4^{-i}\epsilon$. Then we can select $L_i$ to satisfy
$(1+\epsilon_i)^{L_i}\geq\sqrt{2}C_i$. Moreover, choose $m_i$ such
that $\frac{L_i}{m_i}<\frac{1}{i}$ and put $n_i=2m_i$.

Let $\mathcal {H}$ be a separable complex Hilbert
space with an orthogonal decomposition $\mathcal {H}=\bigoplus_{i=1}^{\infty}H_i$, where
$H_i$ is an $n_i$-dimensional subspace. For each $i\in \mathbb{N}$, define
$T_i: H_i\rightarrow H_i$ by
$$
T_{i}={\begin{matrix}\begin{bmatrix}
{1-\epsilon_{i}}&2\epsilon_{i}\\
&\ddots&\ddots\\
&&\ddots&2\epsilon_{i}\\
&&&{1-\epsilon_{i}}\\
\end{bmatrix}&
\begin{matrix}
\end{matrix}\end{matrix}}_{(n_{i}\times n_{i})}.
$$
Then we obtain an invertible bounded linear operator
$$
T=\bigoplus_{i=1}^{\infty}T_i:\mathcal {H}\rightarrow\mathcal {H}.
$$
Following from Theorem 9 in \cite{Hou}, $T$ is distributional chaotic(Li-Yorke chaotic). Now consider the inverse of $T$.
Notice that $T^{-1}=\bigoplus_{i=1}^{\infty}T_i^{-1}$ and
$$
T_{i}^{-1}={\begin{matrix}\begin{bmatrix}
{\frac{1}{1-\epsilon_{i}}}&*&\cdots&*\\
&\ddots&\ddots&\vdots\\
&&\ddots&*\\
&&&{\frac{1}{1-\epsilon_{i}}}\\
\end{bmatrix}&
\begin{matrix}
\end{matrix}\end{matrix}}_{(n_{i}\times n_{i})}.
$$
For any $0\neq x=\oplus_{i=1}^{\infty}u_i\in\mathcal {H}$, where $u_i\in H_i$, there exists some certain $u_i\neq0$. Since $T_i^{-1}$ is an upper-triangular matrix with all eigenvalues more than 1,
then
$$
\lim\limits_{n\rightarrow+\infty}\|T^{-n}(x)\|\geq\lim\limits_{n\rightarrow+\infty}\|T_i^{-n}(u_i)\|=+\infty
$$
Therefor, $T^{-1}$ can not be Li-Yorke chaotic.
\end{proof}

\section{\normalsize Invertible nonlinear mappings on finite dimensional spaces}

Denote $\mathbb{R}$ by the set of all real numbers, $\mathbb{Q}$ by the set of all rational numbers.
Notice that $(\mathbb{R},+)$ is an abelian group and $\mathbb{Q}$ is a subgroup of $\mathbb{R}$. Let us consider a simple conclusion of the quotient group $\mathbb{R}/\mathbb{Q}$ firstly.

\begin{lemma}\label{1}
$\mathbb{R}/\mathbb{Q}$ is an uncountable infinite set.
\end{lemma}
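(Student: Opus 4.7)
The plan is to derive uncountability of $\mathbb{R}/\mathbb{Q}$ from the uncountability of $\mathbb{R}$ together with the countability of $\mathbb{Q}$, via a standard cardinality argument on cosets.

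First, I would recall the two ingredients: $\mathbb{Q}$ is countable, and $\mathbb{R}$ is uncountable (Cantor). Next, I would observe that for every $x \in \mathbb{R}$, the coset $x + \mathbb{Q}$ is in bijection with $\mathbb{Q}$ via $q \mapsto x+q$, so each coset is countable. Since $\mathbb{R}$ equals the disjoint union of all cosets in $\mathbb{R}/\mathbb{Q}$, I can write
\begin{equation*}
\mathbb{R} \;=\; \bigsqcup_{[x]\in\mathbb{R}/\mathbb{Q}} (x+\mathbb{Q}).
\end{equation*}

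Then I would argue by contradiction: suppose $\mathbb{R}/\mathbb{Q}$ were countable. Then $\mathbb{R}$ would be a countable union of countable sets, hence countable, contradicting the uncountability of $\mathbb{R}$. Therefore $\mathbb{R}/\mathbb{Q}$ must be uncountable, and in particular infinite.

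I do not anticipate any serious obstacle here; the only mild subtlety is that the argument uses countable-union-of-countable-is-countable, which relies on a weak form of the axiom of choice, but this is standard and need not be emphasized. The proof is essentially a two-line cardinality computation once the coset decomposition of $\mathbb{R}$ is written down.
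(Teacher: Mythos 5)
Your proposal is correct and follows exactly the same route as the paper: each coset $x+\mathbb{Q}$ is countable, $\mathbb{R}$ is the disjoint union of the cosets, and since $\mathbb{R}$ is uncountable there must be uncountably many cosets. The paper states this in three lines (with a typo, ``countable finite'' for ``countably infinite''); you merely spell out the countable-union-of-countable-sets step that the paper leaves implicit.
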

\begin{proof}
For each $x\in\mathbb{R}$, denote $[x]$ the equivalence class of $x$ in $\mathbb{R}/\mathbb{Q}$.
Then $[x]=\{x+q; q\in\mathbb{Q}\}$ is a countable finite subset of $\mathbb{R}$. Since $\mathbb{R}$ is uncountable, $\mathbb{R}/\mathbb{Q}$ is an uncountable infinite set.
\end{proof}

\begin{lemma}\label{2}
There exists an uncountable infinite subset $B$ of the open interval $(0,1)$ such that, for any distinct $x,y\in B$, $\ln\frac{x}{1-x}-\ln\frac{y}{1-y}$ is an irrational number.
\end{lemma}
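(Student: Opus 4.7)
The plan is to transport the group-theoretic statement of Lemma \ref{1} across the logit homeomorphism $\varphi:(0,1)\to\mathbb{R}$, $\varphi(x)=\ln\frac{x}{1-x}$. Since $\varphi$ is a strictly increasing bijection of $(0,1)$ onto $\mathbb{R}$, the condition that $\ln\frac{x}{1-x}-\ln\frac{y}{1-y}$ be irrational is exactly the condition that $\varphi(x)$ and $\varphi(y)$ lie in different cosets of $\mathbb{Q}$ inside the additive group $\mathbb{R}$.

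First I would invoke Lemma \ref{1} to conclude that $\mathbb{R}/\mathbb{Q}$ has uncountably many equivalence classes. Then, using the axiom of choice, I would select one representative from each class, producing an uncountable set $A\subseteq\mathbb{R}$ with the property that for any two distinct $a,a'\in A$, the difference $a-a'$ is irrational (this is the classical Vitali-type construction).

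Next I would set $B=\varphi^{-1}(A)\subseteq(0,1)$. Since $\varphi$ is a bijection, $B$ has the same cardinality as $A$, hence is uncountable. For any distinct $x,y\in B$, write $a=\varphi(x)$ and $a'=\varphi(y)$; these are distinct elements of $A$, so $\varphi(x)-\varphi(y)=a-a'\notin\mathbb{Q}$, which is exactly the desired conclusion.

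The proof is almost entirely formal, so there is no real obstacle; the only thing to verify carefully is that the logit function maps $(0,1)$ bijectively onto all of $\mathbb{R}$ (so that representatives from every coset are hit, guaranteeing $|B|=|A|$ is uncountable), and that the use of the axiom of choice to produce the transversal $A$ is acceptable in this context. Both are routine.
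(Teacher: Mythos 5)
Your proposal is correct and follows essentially the same route as the paper: both invoke Lemma \ref{1}, use the axiom of choice to pick a transversal $A$ of $\mathbb{R}/\mathbb{Q}$, and pull it back through the logit homeomorphism $\phi(r)=\ln\frac{r}{1-r}$ to get $B=\phi^{-1}(A)$. No substantive differences to report.
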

\begin{proof}
Define a homeomorphism $\phi: (0,1)\rightarrow\mathbb{R}$,
$$
\phi(r)=\ln\frac{r}{1-r}, \ \  \ for \ any \ r\in(0,1).
$$
For each element $[x]$ in $\mathbb{R}/\mathbb{Q}$, select one number $x\in[x]$ and consequently denote $A$
the collection of such numbers. Then $A\subseteq\mathbb{R}$ is an uncountable set by Lemma \ref{1}, in which distinct numbers belong to distinct equivalence classes.

Let $B=\{r\in(0,1);\phi(r)\in A\}$.
Then $B$ is uncountable, and for any two distinct number $x,y\in B$,
$$
\ln\frac{x}{1-x}-\ln\frac{y}{1-y}\notin \mathbb{Q}.
$$
\end{proof}

\begin{theorem}\label{fnchaos}
There exist a homeomorphism $f$ on the unit open disk $\mathbb{D}\triangleq\{z\in\mathbb{C};|z|< 1\}$ and a homeomorphism $g$ on $\mathbb{R}^2$,
such that $f$ and $g$ are topologically conjugate, $f$ is Li-Yorke chaotic but $f^{-1}$ is not, $g$ and $g^{-1}$ are not Li-Yorke chaotic.
\end{theorem}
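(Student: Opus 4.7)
The plan is to build $f$ in polar coordinates by pulling the unit translation on $\mathbb{R}$ back via $\phi(r)=\ln\tfrac{r}{1-r}$ to a radial homeomorphism $\psi(r)=\phi^{-1}(\phi(r)+1)$ of $(0,1)$, and then attaching a radius-dependent angular twist. Concretely, I will set
\begin{equation*}
f(re^{i\theta}) \;=\; \psi(r)\,e^{i(\theta+2\pi\phi(r))}, \qquad f(0)=0,
\end{equation*}
so that every nonzero forward orbit spirals out to the boundary while every backward orbit converges to the origin. Continuity at $0$ survives the divergence $2\pi\phi(r)\to-\infty$ because $\psi(r)\to 0$ pinches the modulus; the same argument for $f^{-1}$ makes $f$ a homeomorphism of $\mathbb{D}$. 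For the $\mathbb{R}^2$ side I will conjugate via the radial homeomorphism $h(re^{i\theta})=\tfrac{r}{1-r}e^{i\theta}$, $h(0)=0$, and a straightforward calculation collapses $g=h\circ f\circ h^{-1}$ to the clean formula $g(z)=e\,z\,e^{2\pi i\ln|z|}$ with $g(0)=0$.

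For the uncountable scrambled set I will take $\Gamma=B\subseteq\mathbb{D}$ from Lemma~\ref{2}, viewing $B\subset(0,1)$ as lying on the positive real axis. Iteration gives $\psi^n(b)=\phi^{-1}(\phi(b)+n)\to 1$, and the radii of two different orbits merge at rate $e^{-n}$, so forward distances are asymptotically controlled by chords on the unit circle. A telescoping of $\sum_{k=0}^{n-1}\phi(\psi^k(r))=n\phi(r)+\tfrac{n(n-1)}{2}$ reduces the angular difference between two orbits to $2\pi n(\phi(b_1)-\phi(b_2))\pmod{2\pi}$; Lemma~\ref{2} guarantees $\phi(b_1)-\phi(b_2)\notin\mathbb{Q}$, so Weyl equidistribution makes this difference dense in $[0,2\pi)$. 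Hence $\liminf d(f^np_1,f^np_2)=0$ while $\limsup=2$, which provides the Li-Yorke chaos of $f$. In the opposite direction $\psi^{-n}(r)\to 0$ for every $r$, so every orbit of $f^{-1}$ converges to the origin and all pairwise distances vanish, excluding Li-Yorke chaos for $f^{-1}$.

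For $g$, iteration gives $|g^n(z)|=e^n|z|$, so every nonzero orbit escapes to infinity. A short case split shows $d(g^nz_1,g^nz_2)\to\infty$ for any two distinct points: when $|z_1|\ne|z_2|$ the modulus gap alone forces divergence, and when $|z_1|=|z_2|$ the two orbits share their radius while the angular gap telescopes to the constant $\arg z_1-\arg z_2$, so the chord length is $2e^n|z_1|\sin\bigl(\tfrac{1}{2}(\arg z_1-\arg z_2)\bigr)$ which still blows up. Thus no pair has $\liminf d=0$, and $g$ is not Li-Yorke chaotic. Symmetrically $|g^{-n}(z)|=e^{-n}|z|\to 0$ collapses all orbits of $g^{-1}$ to the origin and kills Li-Yorke chaos there as well. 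The main obstacle is calibrating the angular twist delicately enough that forward iterates of an uncountable set of radii keep coming arbitrarily close while also separating (which is precisely where the pairwise irrationality of Lemma~\ref{2} is used), while simultaneously ensuring that the radial homeomorphism $h$ stretches the radii so drastically in $\mathbb{R}^2$ that the recurrent close approaches are destroyed; this asymmetry between $\mathbb{D}$ and $\mathbb{R}^2$ coordinates is what drives both the inverse asymmetry of $f$ and the failure of Li-Yorke chaos to be a conjugacy invariant.
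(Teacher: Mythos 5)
Your construction is exactly the paper's: your $\psi(r)=\phi^{-1}(\phi(r)+1)=\frac{er}{(e-1)r+1}$ with angular twist $2\pi\phi(r)$ is precisely their $f$, your $h$ and $g$ coincide with theirs, and your Li--Yorke argument for $f$ (the scrambled set $B$ from Lemma~\ref{2}, radii merging at rate $e^{-n}$, and density of $n(\phi(b_1)-\phi(b_2))\bmod 1$ giving $\liminf=0$ and $\limsup=2$) is the same. The only differences are presentational; if anything, your case split showing $d(g^nz_1,g^nz_2)\to\infty$ for every distinct pair is slightly more careful than the paper's bare observation that $|g^n(z)|\to+\infty$.
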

\begin{proof}

Define a mapping $f:\mathbb{D}\rightarrow\mathbb{D}$ by
$$
f(0)=0 \ \ and \ \ f(z)=\frac{ez}{e|z|-|z|+1}e^{2\pi i \ln\frac{|z|}{1-|z|}}, \ \ \ for \ all \ 0\neq z\in \mathbb{D}.
$$
Then $f$ is a homeomorphism on $\mathbb{D}$ and its inverse is
$$
f^{-1}(0)=0 \ \ and \ \ f^{-1}(w)=\frac{e^{-1}w}{e^{-1}|w|-|w|+1}e^{-2\pi i \ln\frac{|w|}{1-|w|}}, \ \ \ for \ all \ 0\neq w\in \mathbb{D}.
$$

Define a mapping $g:\mathbb{R}^2\rightarrow\mathbb{R}^2$ by
$$
g(0)=0 \ \ and \ \ g(z)={ez}e^{2\pi i \ln |z|}, \ \ \ for \ all \ 0\neq z\in \mathbb{R}^2.
$$
Then $g$ is a homeomorphism on $\mathbb{R}^2$ and its inverse is
$$
g^{-1}(0)=0 \ \ and \ \ g^{-1}(w)={e^{-1}w}e^{-2\pi i \ln |w|}, \ \ \ for \ all \ 0\neq w\in \mathbb{R}^2.
$$

Define a mapping $h:\mathbb{D}\rightarrow\mathbb{R}^2$ by
$$
h(z)=\frac{z}{1-|z|}, \ \ \ for \ all \ z\in \mathbb{D}.
$$
Then $h$ is a homeomorphism and its inverse is
$$
h^{-1}(w)=\frac{w}{1+|w|}, \ \ \ for \ all \ w\in \mathbb{R}^2.
$$

One can see
$$
h\circ f(0)=0=g\circ h(0) \ \ and \ \ h\circ f(z)=\frac{ez}{1-|z|}e^{2\pi i \ln\frac{|z|}{1-|z|}}=g\circ h(z),  \ \ for \ all \ 0\neq z\in \mathbb{D}.
$$
Consequently, $h$ is a topological conjugacy from $f$ to $g$.

It is easy to see that for any $k\in \mathbb{Z}$
$$
g^{k}(0)=0 \ \ and \ \ g^{k}(z)={e^{k}z}e^{2\pi i k\ln |z|}, \ \ for \ all \ 0\neq z\in \mathbb{R}^2.
$$
Then,
$$
\lim\limits_{n\rightarrow+\infty}g^{n}(0)=\lim\limits_{n\rightarrow+\infty}g^{-n}(0)=0,
$$
and for any $ 0\neq z\in \mathbb{R}^2$,
$$
\lim\limits_{n\rightarrow+\infty}|g^{n}(z)|=+\infty,
$$
and
$$
\lim\limits_{n\rightarrow+\infty}|g^{-n}(z)|=0.
$$
Therefore, both $g$ and $g^{-1}$ are not Li-Yorke chaotic.

Notice that for any $n\in \mathbb{N}$
$$
f^{-n}(0)=0 \ \ and \ \ f^{-n}(z)=\frac{e^{-n}z}{e^{-n}|z|-|z|+1}e^{2\pi i (-n)\ln\frac{|z|}{1-|z|}}, \ \ \ for \ all \ 0\neq z\in \mathbb{D}.
$$
Then
$$
\lim\limits_{n\rightarrow+\infty}|f^{-n}(z)|=0, \ \ \ for \ all \ 0\neq z\in \mathbb{D}.
$$
Therefore, $f^{-1}$ is not Li-Yorke chaotic.

To complete this proof, it suffices to show $f$ is Li-Yorke chaotic now. Choose an uncountable infinite subset $B$ of the open interval $(0,1)$ defined in Lemma \ref{2}.
Given any two distinct points $x,y\in B$, then $\ln\frac{x}{1-x}-\ln\frac{y}{1-y}$ is an irrational number. Consequently, there exist two sequences of strictly increasing positive integers
$\{m_k\}$ and $\{n_k\}$ such that
$$
\lim\limits_{k\rightarrow+\infty}e^{2\pi i m_k(\ln\frac{x}{1-x}-\ln\frac{y}{1-y})}=1 \ \ and \ \
\lim\limits_{k\rightarrow+\infty}e^{2\pi i n_k(\ln\frac{x}{1-x}-\ln\frac{y}{1-y})}=-1.
$$
Since for any $n\in \mathbb{N}$,
$$
f^{n}(z)=\frac{e^{n}z}{e^{n}|z|-|z|+1}e^{2\pi i n\ln\frac{|z|}{1-|z|}} \ \ \ for \ all \ 0\neq z\in \mathbb{D},
$$
and
$$
\lim\limits_{n\rightarrow+\infty}\frac{e^{n}|z|}{e^{n}|z|-|z|+1}=1 \  \ for \ all \ 0\neq z\in \mathbb{D},
$$
then
\begin{eqnarray*}
&& \liminf\limits_{n\rightarrow +\infty}|f^{n}(x)-f^{n}(y)|  \\
&\leq& \lim\limits_{k\rightarrow +\infty}|f^{m_k}(x)-f^{m_k}(y)|  \\
&=& \lim\limits_{k\rightarrow+\infty}|\frac{e^{m_k}x}{e^{m_k}x-x+1}e^{2\pi i {m_k}\ln\frac{x}{1-x}}-\frac{e^{m_k}y}{e^{m_k}y-y+1}e^{2\pi i {m_k}\ln\frac{y}{1-y}}|  \\
&\leq& \lim\limits_{k\rightarrow+\infty}|\frac{e^{m_k}x}{e^{m_k}x-x+1}e^{2\pi i {m_k}\ln\frac{x}{1-x}}-\frac{e^{m_k}y}{e^{m_k}y-y+1}e^{2\pi i {m_k}\ln\frac{x}{1-x}}|+  \\
&& \lim\limits_{k\rightarrow +\infty}|\frac{e^{m_k}y}{e^{m_k}y-y+1}e^{2\pi i {m_k}\ln\frac{x}{1-x}}-\frac{e^{m_k}y}{e^{m_k}y-y+1}e^{2\pi i {m_k}\ln\frac{y}{1-y}}|  \\
&=& \lim\limits_{k\rightarrow+\infty}|\frac{e^{m_k}x}{e^{m_k}x-x+1}-\frac{e^{m_k}y}{e^{m_k}y-y+1}||e^{2\pi i {m_k}\ln\frac{x}{1-x}}|+   \\
&&\lim\limits_{k\rightarrow +\infty}|\frac{e^{m_k}y}{e^{m_k}y-y+1}||e^{2\pi i {m_k}\ln\frac{y}{1-y}}||e^{2\pi i {m_k}(\ln\frac{x}{1-x}-\ln\frac{y}{1-y})}-1|   \\
&=& 0.
\end{eqnarray*}
and
\begin{eqnarray*}
&& \limsup\limits_{n\rightarrow +\infty}|f^{n}(x)-f^{n}(y)|  \\
&\geq& \lim\limits_{k\rightarrow +\infty}|f^{n_k}(x)-f^{n_k}(y)|  \\
&=& \lim\limits_{k\rightarrow+\infty}|\frac{e^{n_k}x}{e^{n_k}x-x+1}e^{2\pi i {n_k}\ln\frac{x}{1-x}}-\frac{e^{n_k}y}{e^{n_k}y-y+1}e^{2\pi i {n_k}\ln\frac{y}{1-y}}|  \\
&\geq& \lim\limits_{k\rightarrow+\infty}|\frac{e^{n_k}y}{e^{n_k}y-y+1}e^{2\pi i {n_k}\ln\frac{x}{1-x}}-\frac{e^{n_k}y}{e^{n_k}y-y+1}e^{2\pi i {n_k}\ln\frac{y}{1-y}}|-  \\
&& \lim\limits_{k\rightarrow +\infty}|\frac{e^{n_k}x}{e^{n_k}x-x+1}e^{2\pi i {n_k}\ln\frac{x}{1-x}}-\frac{e^{n_k}y}{e^{m_k}y-y+1}e^{2\pi i {n_k}\ln\frac{x}{1-x}}|  \\
&=& \lim\limits_{k\rightarrow +\infty}|\frac{e^{n_k}y}{e^{n_k}y-y+1}||e^{2\pi i {n_k}\ln\frac{y}{1-y}}||e^{2\pi i {n_k}(\ln\frac{x}{1-x}-\ln\frac{y}{1-y})}-1|-   \\
&&\lim\limits_{k\rightarrow+\infty}|\frac{e^{n_k}x}{e^{n_k}x-x+1}-\frac{e^{n_k}y}{e^{n_k}y-y+1}||e^{2\pi i {n_k}\ln\frac{x}{1-x}}|  \\
&=& 2.
\end{eqnarray*}
Thus, $\{x,y\}$ is a Li-Yorke chaotic pair and hence $f$ is Li-Yorke chaotic.
\end{proof}

\end{document}